\documentclass[11pt,a4paper]{article}
\usepackage[latin1]{inputenc}
\usepackage{amsmath}
\usepackage{amsthm}
\usepackage{amsfonts}
\usepackage{amsfonts,amsthm,latexsym,amsmath,amssymb,amscd,epsfig,psfrag,enumerate}
\usepackage{graphics,graphicx, bezier, float, color, hyperref}
\usepackage{amssymb,url}
\usepackage{multienum}
\usepackage[table]{xcolor}
\usepackage{multicol,multirow}
\usepackage{graphicx}
\usepackage{fancyvrb}
\usepackage{parskip}
\usepackage[toc,page]{appendix}
\sloppy
\setlength{\parindent}{0pt}
\setlength\parskip{0.1in}
\usepackage[top=2.7cm, bottom=2.7cm, left=1.5cm, right=1.5cm]{geometry}
\usepackage{xcolor}

\usepackage{blkarray}
\newtheorem{theorem}{Theorem}[section]
\newtheorem{lemma}{Lemma}[section]

\usepackage[none]{hyphenat}[section]
\newtheorem{definition}{Definition}[section]

\numberwithin{equation}{section}
\numberwithin{table}{section}
\numberwithin{figure}{section}

\title{Lucas numbers that are palindromic concatenations of
	two distinct repdigits}
\author{Herbert Batte$^{1,*} $}
\date{}

\begin{document}
\maketitle
\abstract{ Let $ \{L_n\}_{n\geq 0} $ be the sequence of Lucas numbers. In this paper, we determine all Lucas numbers that are palindromic concatenations of two distinct repdigits. } 

{\bf Keywords and phrases}: Lucas numbers; linear forms in logarithms; Repdigits; Baker--Davenport reduction method.
 
{\bf 2020 Mathematics Subject Classification}: 11B39, 11D61, 11J86

\thanks{$ ^{*} $ Corresponding author}

\section{Introduction}\label{intro}
\subsection{Background}
\label{sec:1.1}
Consider the Lucas number sequence $\{L_n\}_{n\ge 0}$, which starts with $L_0=2$, $L_1=1$, and follows the pattern $L_{n+2}=L_{n+1}+L_{n}$ for all $n \geq 0$. The initial numbers in this sequence are
$$
2,\;1,\;3,\;4,\;7,\;11,\;18,\;29,\;47,\;76,\;123,\;199,\ldots.
$$
A \textit{repdigit} in base 10 is a positive number $N$ made up of a single repeating digit. Specifically, $N$ is written as
\[
N = \underbrace{\overline{d\cdots d}}_{\ell \text{ times}} = d \left( \frac{10^\ell - 1}{9} \right),
\]
with positive integers \(d\) and \( \ell \), where \(0 \leq d \leq 9\) and \( \ell \geq 1\). Our study adds to the extensive research on the Diophantine characteristics of certain sequences defined by recurrence relations. Particularly, we explore how the terms of these sequences can be expressed as sequences within themselves or as combinations thereof. The work of Luca and Banks \cite{banks}, despite its broad scope, yielded some limited results regarding the count of such sequence terms. The case of Fibonacci numbers composed of two repdigits was addressed in \cite{ala}, with the largest identified as \(F_{14} = 377\).

Recent work has also looked into the relationship between linear recurrence numbers and repdigits. For example, all repdigits formed by the addition of two Padovan numbers were identified in \cite{gar}. This was expanded upon by the Ddamulira in \cite{dda}, who also looked at Padovan numbers that are the concatenation of two different repdigits, finding the largest to be \(P_{21} = 200\) in \cite{dda2}.

Further contributions to this field of research have been made by Bedná\v rik and Trojovská \cite{bed}, Boussayoud et al. \cite{bou}, Bravo and Luca \cite{bravo}, and others \cite{ddam, erd, raya, tro, troj, qu}. The specific findings in \cite{qu} were revisited in \cite{er}, which confirmed that the only Lucas numbers that can be formed by combining two repdigits are
$$ 11,~18, ~29, ~47,~76, ~199, ~322.$$
This finding, also reported in \cite{qu}, was derived through different methods. An interesting follow-up to this work \cite{er} would be to identify Lucas numbers that are \textit{palindromic}. In this context, a number is a \textit{palindrome} if it reads the same backwards as forwards. To begin exploring this, we currently examine a more constrained Diophantine equation:
\begin{align}\label{eq1.1l}
	L_n = \overline{\underbrace{d_1 \ldots d_1}_{\ell \text{ times}}\underbrace{d_2 \ldots d_2}_{m \text{ times}}\underbrace{d_1 \ldots d_1}_{\ell \text{ times}}},
\end{align} 
where \( d_1, d_2 \in \{0, 1, 2, \ldots, 9\}, \) with \( d_1 > 0 \) and \(d_1\ne d_2\). Similar work has been done in \cite{chal} proving that $151$ and $616$ are the only Padovan numbers that are palindromic concatenations of two distinct repdigits. 
\newpage
Here, we present the following result.
\subsection{Main Results}\label{sec:1.2l}
\begin{theorem}\label{thm1.1l} 
	There is no Lucas number which is a palindromic concatenation of two distinct repdigits.
\end{theorem}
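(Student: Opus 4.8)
The plan is to convert \eqref{eq1.1l} into two upper bounds on linear forms in logarithms, squeeze them with Matveev's theorem to obtain an absolute bound on $n$, reduce that bound by the Baker--Davenport method, and finish with a finite verification. Write $\alpha=(1+\sqrt5)/2$ and $\beta=(1-\sqrt5)/2$ for the roots of $x^2-x-1$, so that $L_n=\alpha^n+\beta^n$, $|\beta^n|\le 1$, and $\alpha^{n-1}\le L_n\le\alpha^{n+1}$ for $n\ge1$. The right--hand side of \eqref{eq1.1l} is an integer with $2\ell+m$ digits, and summing its three blocks gives the closed form
\[
9L_n=d_1\,10^{2\ell+m}+(d_2-d_1)\,10^{\ell+m}+(d_1-d_2)\,10^{\ell}-d_1 .
\]
Since $10^{2\ell+m-1}\le L_n<10^{2\ell+m}$, the Binet bounds make $n$ and $2\ell+m$ linearly comparable, so it suffices to bound $n$; the finitely many small cases (equivalently, $2\ell+m\le 2$) are dispatched at the end.

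For the first linear form, the closed form gives $|9\alpha^n-d_1\,10^{2\ell+m}|\le 9\cdot10^{\ell+m}+9\cdot10^{\ell}+18<10^{\ell+m+2}$, hence
\[
\Bigl|\tfrac{9}{d_1}\,\alpha^{n}\,10^{-(2\ell+m)}-1\Bigr|<10^{\,2-\ell}.
\]
Set $\Gamma_1=n\log\alpha-(2\ell+m)\log 10+\log(9/d_1)$, a linear form in logarithms of the fixed algebraic numbers $\alpha$, $10$, $9/d_1$, whose heights are absolutely bounded. One checks $\Gamma_1\ne0$: otherwise $\alpha^n=\tfrac{d_1}{9}10^{2\ell+m}$, and since $\alpha^n\beta^n=(-1)^n$ this would force $L_n=\tfrac{d_1}{9}10^{2\ell+m}+(-1)^n\tfrac{9}{d_1}10^{-(2\ell+m)}\notin\mathbb{Z}$. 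Matveev's theorem then gives $\log|\Gamma_1|>-C\log n$ with an explicit $C$, which together with the displayed upper bound yields $\ell<C_1\log n$.

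For the second linear form, group $d_1\,10^{2\ell+m}+(d_2-d_1)\,10^{\ell+m}=10^{\ell+m}\bigl(d_1\,10^{\ell}+d_2-d_1\bigr)$ and use $d_1\,10^{\ell}+d_2-d_1\ge 10^{\ell-1}$; the same manipulation yields
\[
\Bigl|\frac{9\,\alpha^{n}}{10^{\ell+m}\bigl(d_1\,10^{\ell}+d_2-d_1\bigr)}-1\Bigr|<10^{\,3-\ell-m}.
\]
Now the relevant form $\Gamma_2=n\log\alpha-(\ell+m)\log 10-\log\bigl(d_1\,10^{\ell}+d_2-d_1\bigr)$ has a third logarithm of height $O(\ell)=O(\log n)$ by the first step; after the analogous non-vanishing check, Matveev's theorem gives $\log|\Gamma_2|>-C'(\log n)^2$, whence $m<C_2(\log n)^2$. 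Inserting $2\ell+m<2C_1\log n+C_2(\log n)^2$ into the comparability of $2\ell+m$ and $n$ forces $n<N_0$ for an explicit, if enormous, $N_0$.

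It remains to reduce $N_0$. Dividing $\Gamma_1$ by $\log 10$ recasts the first inequality as $\bigl\|\,n\,(\log\alpha/\log 10)+\mu\,\bigr\|<A\cdot10^{-\ell}$ with $\mu$ running over the finitely many values $-\log(9/d_1)/\log 10$; the Dujella--Peth\H{o} form of the Baker--Davenport lemma, applied with the continued fraction of $\log\alpha/\log 10$, then shrinks the bound on $\ell$ to a small absolute constant. Feeding the few surviving $\ell$ into the analogous reduction for $\Gamma_2$ (where $\mu$ now also involves the fixed integers $d_1\,10^{\ell}+d_2-d_1$) shrinks the bound on $m$, and the comparability relation then gives $n\le N_1$ with $N_1$ small. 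Since $\ell,m\ge1$, the number in \eqref{eq1.1l} has at least three digits, so one concludes by listing $L_n$ for $n\le N_1$ and checking that none has the prescribed palindromic shape. The principal obstacle is the reduction phase, which must be run uniformly over all admissible digit pairs $(d_1,d_2)$ with $d_1\ne d_2$ and over every surviving value of $\ell$, together with making the Matveev constants and the non-vanishing arguments for $\Gamma_1$ and $\Gamma_2$ fully explicit.
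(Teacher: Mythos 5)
Your proposal is correct and follows the paper's strategy in its essentials: the same digit decomposition $9L_n=d_1\cdot 10^{2\ell+m}-(d_1-d_2)\cdot 10^{\ell+m}+(d_1-d_2)\cdot 10^{\ell}-d_1$, the same first two linear forms treated by Matveev's theorem to get $\ell\ll\log n$ and then $m\ll(\log n)^2$, and the same Dujella--Peth\H{o} reductions. The one genuine structural difference is at the end: the paper applies Matveev a \emph{third} time, to $9\alpha^n-(d_1\cdot 10^{\ell+m}-(d_1-d_2)\cdot 10^m+(d_1-d_2))\cdot 10^{\ell}$, to obtain $n\ll(\log n)^3$ and later runs a third reduction to push $n$ below $269$; you instead invoke the comparability $n<(2\ell+m)\log 10/\log\alpha+1$ coming from the digit count and the Binet bounds, both to convert $2\ell+m\ll(\log n)^2$ directly into an absolute bound on $n$ and, after the reductions, to convert the small bounds on $\ell$ and $m$ into a small bound on $n$. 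This shortcut is valid and actually leaner (with the paper's reduced values $\ell<52$, $m<54$ it gives $n<760$, already below the computational cutoff), at the cost of a marginally worse pre-reduction constant. One small point to tighten: your non-vanishing argument for $\Gamma_1$ claims that $(d_1/9)10^{2\ell+m}+(-1)^n(9/d_1)10^{-(2\ell+m)}$ is visibly not an integer, but a sum of two non-integers can be an integer; either add the observation that the first term has fractional part a multiple of $1/9$ while the second has absolute value in $(0,1/9)$, or simply argue, as the paper does, that $\alpha^n$ is irrational while $(d_1/9)10^{2\ell+m}$ is rational.
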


\section{Methods}
\subsection{Preliminaries}
Here, we start with the well-known Binet formula for the sequence of Lucas numbers. It is given by
\begin{align}\label{eq2.1l}
	L_n = \alpha^n +\beta^n,~~~\text{where}~~\alpha=\dfrac{1+\sqrt{5}}{2}, ~~\beta=\dfrac{1-\sqrt{5}}{2}.
\end{align}
Note that $\beta=-\alpha^{-1}$ and $|\beta|<1$. It was shown in \cite{BRL} that 
\begin{align}\label{eq2.2l}
	\alpha^{n-1} \le L_n \le2\alpha^n, \quad \text{holds for all} \quad n\ge0.
\end{align}
We go back and rewrite relation \eqref{eq1.1l} as
\begin{align}\label{eq2.3l}
	L_n &= \overline{\underbrace{d_1 \ldots d_1}_{\ell \text{ times}}\underbrace{d_2 \ldots d_2}_{m \text{ times}}\underbrace{d_1 \ldots d_1}_{\ell \text{ times}}}\nonumber\\
	&=\overline{\underbrace{d_1 \ldots d_1}_{\ell \text{ times}}\cdot 10^{\ell+m}+\underbrace{d_2 \ldots d_2}_{m \text{ times}}\cdot 10^{\ell}+\underbrace{d_1 \ldots d_1}_{\ell \text{ times}}}\nonumber\\
	&=\dfrac{1}{9}\left(d_1\cdot 10^{2\ell+m}-(d_1-d_2)\cdot 10^{\ell+m} +(d_1-d_2)\cdot 10^{\ell}-d_1 \right),
\end{align} 
where \( d_1, d_2 \in \{0, 1, 2, \ldots, 9\}\), \( d_1 > 0 \) and \(d_1\ne d_2\).

By equation \eqref{eq2.2l}, assume for a moment that $n>1000$, then equation \eqref{eq2.2l} together with \eqref{eq2.3l} imply that
\begin{align*}
	2\alpha^n\ge L_n >10^{2\ell +m-1},
\end{align*}
and taking logarithms both sides yields $(2\ell+m-1)\log 10<n\log \alpha+\log 2$, which simplifies as
\begin{align*}
	(2\ell+m)\log 10<n\log \alpha+3<n,
\end{align*}
so that
\begin{align}\label{eq2.4l}
	2\ell+m<n,
\end{align}
holds for all $n>1000$.

\subsection{Linear forms in logarithms}
We use three times Baker--type lower bounds for nonzero linear forms in three logarithms of algebraic numbers. There are many such bounds mentioned in the literature like that of Baker and W{\"u}stholz from \cite{BW} or Matveev from \cite{matl}. Before we can formulate such inequalities we need the notion of height of an algebraic number recalled below.

\begin{definition}\label{def2.1l}
	Let $ \gamma $ be an algebraic number of degree $ d $ with minimal primitive polynomial over the integers $$ a_{0}x^{d}+a_{1}x^{d-1}+\cdots+a_{d}=a_{0}\prod_{i=1}^{d}(x-\gamma^{(i)}), $$ where the leading coefficient $ a_{0} $ is positive. Then, the logarithmic height of $ \gamma$ is given by $$ h(\gamma):= \dfrac{1}{d}\Big(\log a_{0}+\sum_{i=1}^{d}\log \max\{|\gamma^{(i)}|,1\} \Big). $$
\end{definition}
 In particular, if $ \gamma$ is a rational number represented as $\gamma:=p/q$ with coprime integers $p$ and $ q\ge 1$, then $ h(\gamma ) = \log \max\{|p|, q\} $. 
The following properties of the logarithmic height function $ h(\cdot) $ will be used in the rest of the paper without further reference:
\begin{equation}\nonumber
	\begin{aligned}
		h(\gamma_{1}\pm\gamma_{2}) &\leq h(\gamma_{1})+h(\gamma_{2})+\log 2;\\
		h(\gamma_{1}\gamma_{2}^{\pm 1} ) &\leq h(\gamma_{1})+h(\gamma_{2});\\
		h(\gamma^{s}) &= |s|h(\gamma)  \quad {\text{\rm valid for}}\quad s\in \mathbb{Z}.
	\end{aligned}
\end{equation}

A linear form in logarithms is an expression
\begin{equation}
	\label{eq:Lambdal}
	\Lambda:=b_1\log \gamma_1+\cdots+b_t\log \gamma_t,
\end{equation}
where for us $\gamma_1,\ldots,\gamma_t$ are positive real  algebraic numbers and $b_1,\ldots,b_t$ are nonzero integers. We assume, $\Lambda\ne 0$. We need lower bounds 
for $|\Lambda|$. We write ${\mathbb K}:={\mathbb Q}(\gamma_1,\ldots,\gamma_t)$ and $D$ for the degree of ${\mathbb K}$.
We start with the general form due to Matveev \cite{matl}. 

\begin{theorem}[Matveev, \cite{matl}]
	\label{thm:Matl} 
	Put $\Gamma:=\gamma_1^{b_1}\cdots \gamma_t^{b_t}-1=e^{\Lambda}-1$. Assume $\Gamma\ne 0$. Then 
	$$
	\log |\Gamma|>-1.4\cdot 30^{t+3}\cdot t^{4.5} \cdot D^2 (1+\log D)(1+\log B)A_1\cdots A_t,
	$$
	where $B\ge \max\{|b_1|,\ldots,|b_t|\}$ and $A_i\ge \max\{Dh(\gamma_i),|\log \gamma_i|,0.16\}$ for $i=1,\ldots,t$.
\end{theorem}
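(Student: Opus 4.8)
The plan is to prove the stated lower bound by Baker's method, in the interpolation-determinant formulation developed by Laurent and refined by Matveev to extract the explicit shape of the constant. Throughout, write $\tau_i=\log\gamma_i$ and $\Lambda=b_1\tau_1+\cdots+b_t\tau_t$, so that $\Gamma=e^\Lambda-1$. Since $|\Gamma|=|e^\Lambda-1|\ge \tfrac12|\Lambda|$ whenever $|\Lambda|$ is small (and the inequality is trivial otherwise), a lower bound for $\log|\Gamma|$ reduces to a lower bound for $|\Lambda|$. I would argue by contradiction: assume $\log|\Gamma|$ is smaller than the claimed right-hand side, so that $|\Lambda|$ is extraordinarily small, and then manufacture a single nonzero algebraic quantity that is simultaneously too small to exist analytically and too large to vanish arithmetically.

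First, I would fix integer parameters $L$ and $K$ and form an interpolation determinant $\Delta=\det\big(\varphi_i(\zeta_j)\big)_{1\le i,j\le L}$, where the $\varphi_i(z)$ are the monomial-exponential functions $z^{s}\gamma_1^{r_1 z}\cdots\gamma_t^{r_t z}$, with $(s,r_1,\ldots,r_t)$ ranging over a suitable box of size comparable to $L$, and the $\zeta_j$ are integers. Up to the transcendental factors $e^{\tau_i\zeta_j}$, the entries lie in $\mathbb{K}=\mathbb{Q}(\gamma_1,\ldots,\gamma_t)$, and those factors are governed by the approximate multiplicative relation $\gamma_1^{b_1}\cdots\gamma_t^{b_t}\approx 1$ forced by $\Lambda\approx 0$. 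This determinant plays the role of the classical Siegel's-lemma auxiliary function, but replaces existence-by-counting with one explicit object whose size can be tracked exactly.

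Second comes the analytic upper bound: viewing $\Delta$ as a value of an entire function of one complex variable whose Taylor expansion has many vanishing low-order coefficients (the smallness of $\Lambda$ makes the rows of the matrix nearly linearly dependent), I would apply the Schwarz lemma and the maximum-modulus principle to bound $\log|\Delta|$ above by a large negative quantity proportional to $|\Lambda|$ times combinatorial factors in $L$ and $K$. Third comes the arithmetic lower bound: after clearing denominators, a suitable nonzero multiple of $\Delta$ is an algebraic integer of degree dividing $D$ whose conjugates and height are controlled by $L$, the $A_i$, and $B$, so the Liouville inequality forces $\log|\Delta|$ to exceed an explicit negative bound. Comparing the two estimates and optimizing $L$ and $K$ against $B$, $D$, and $A_1\cdots A_t$ produces the explicit constant $1.4\cdot 30^{t+3}t^{4.5}$.

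The crux, and the step I expect to be by far the hardest, is guaranteeing $\Delta\ne 0$: a vanishing determinant destroys the arithmetic lower bound and collapses the contradiction. This requires a \emph{zero (multiplicity) estimate} on the commutative algebraic group $\mathbb{G}_a\times\mathbb{G}_m^t$, ensuring the chosen $\varphi_i$ are linearly independent on the chosen points $\zeta_j$ unless the $\gamma_i$ satisfy an exact multiplicative relation of small degree. Disposing of that degenerate, multiplicatively dependent case is precisely where Matveev's Kummer-theoretic descent and his induction on the number $t$ of logarithms enter, and it is what forces the specific dependence $30^{t+3}t^{4.5}$ on $t$. The analytic Schwarz step and the Liouville step are then routine once the determinant and the zero estimate are in hand; the genuine difficulty is the zero estimate together with the delicate bookkeeping needed to pin down the numerical constants.
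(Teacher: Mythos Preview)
The paper does not prove this theorem at all: Theorem~\ref{thm:Matl} is quoted from Matveev's paper \cite{matl} as a black box and used three times in the proofs of Lemmas~\ref{lem3.1l}--\ref{lem3.3l}, with no argument given here. So there is no ``paper's own proof'' to compare your proposal against.

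That said, what you have written is a fair high-level outline of the actual strategy in \cite{matl}: interpolation determinant in place of an auxiliary function, Schwarz-type analytic upper bound, Liouville-type arithmetic lower bound, and a zero/multiplicity estimate on $\mathbb{G}_a\times\mathbb{G}_m^t$ together with Kummer descent to handle the degenerate multiplicatively dependent cases. As a sketch it is accurate in spirit, but it is only a sketch: none of the parameter choices, none of the height computations, and none of the zero-estimate work are carried out, and those are precisely what produce the constant $1.4\cdot 30^{t+3}t^{4.5}D^2(1+\log D)$. If your goal was to reproduce a proof for inclusion in this paper, the appropriate thing is simply to cite \cite{matl} as the paper does; if your goal was to prove Matveev's theorem from scratch, the outline is correct but the genuine content---the explicit zero estimate and the bookkeeping that pins down the numerical constants---is entirely missing.
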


\subsection{Reduction methods}
Typically, the estimates from Matveev's theorem are excessively large to be practical in computations. To refine these estimates, we employ a modified approach based on the Baker--Davenport reduction method. Our adaptation follows the method introduced by Dujella and Pethö (\cite{duj}, Lemma 5a). When considering a real number \( r \), we use \( \| r \| \) to represent the smallest distance between \( r \) and any integer, which is formally written as \( \min\{|r - n| : n \in \mathbb{Z}\} \).
\begin{lemma}[Dujella \& Pethö, \cite{duj}]\label{dujl}
	Let \( \tau \neq 0 \), and \( A, B, \mu \) be real numbers with \( A > 0 \) and \( B > 1 \). Let \( M > 1 \) be a positive integer and suppose that \( p/q \) is a convergent of the continued fraction expansion of \( \tau \) with \( q > 6M \). Let
	\[
	\varepsilon := \| \mu q \| - M \| \tau q \|.
	\]
	If \( \varepsilon > 0 \), then there is no solution of the inequality
	\[
	0 < |m\tau - n + \mu| < AB^{-k}
	\]
	in positive integers \( m, n, k \) with
	\[
	\frac{\log(Aq/\varepsilon)}{\log B} \leq k \quad \text{and} \quad m \leq M.
	\]
\end{lemma}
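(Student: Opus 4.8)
The plan is to argue by contradiction, exploiting the best-approximation property of the convergent $p/q$. Suppose that, contrary to the claim, there exist positive integers $m, n, k$ with $m \le M$, with $\log(Aq/\varepsilon)/\log B \le k$, and satisfying $0 < |m\tau - n + \mu| < AB^{-k}$. The goal is to manufacture a lower bound for $|m\tau - n + \mu|$ of the shape $\varepsilon/q$ that is incompatible with the hypothesised upper bound, and then read off the contradiction by taking logarithms.

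The first step is to multiply the quantity $m\tau - n + \mu$ by $q$ and pass to the distance-to-nearest-integer function $\|\cdot\|$. Since $n$ and $q$ are integers, $nq \in \mathbb{Z}$, and because subtracting an integer does not change $\|\cdot\|$ while $|z| \ge \|z\|$ always holds, I would obtain $|mq\tau - nq + \mu q| \ge \|mq\tau + \mu q\|$. The triangle inequality for $\|\cdot\|$ — namely $\|x+y\| \ge \|x\| - \|y\|$, which follows from $\|x\| = \|(x+y)-y\| \le \|x+y\| + \|y\|$ — then yields $\|mq\tau + \mu q\| \ge \|\mu q\| - \|mq\tau\|$.

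The key continued-fraction input enters when bounding $\|mq\tau\|$. Because $p/q$ is a convergent of $\tau$ and $q > 6M$ is large (recall $M > 1$ forces $q > 12$, so the next denominator exceeds $q > 2$ and hence $|q\tau - p| < 1/2$), the numerator $p$ is the nearest integer to $q\tau$; consequently $|q\tau - p| = \|q\tau\| = \|\tau q\|$. Writing $mq\tau = mp + m(q\tau - p)$ with $mp \in \mathbb{Z}$ then gives $\|mq\tau\| = \|m(q\tau - p)\| \le |m(q\tau - p)| = m\|\tau q\| \le M\|\tau q\|$, where the last step uses $m \le M$. Feeding this into the previous display produces $\|mq\tau + \mu q\| \ge \|\mu q\| - M\|\tau q\| = \varepsilon$, and therefore $|m\tau - n + \mu| = \tfrac{1}{q}|mq\tau - nq + \mu q| \ge \varepsilon/q$.

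Finally I would collide this lower bound with the assumed upper bound to get $\varepsilon/q \le |m\tau - n + \mu| < AB^{-k}$, so that $B^{k} < Aq/\varepsilon$. Since $\varepsilon > 0$ makes the right-hand side positive and $B > 1$ makes $\log B > 0$, taking logarithms is legitimate and gives $k < \log(Aq/\varepsilon)/\log B$, directly contradicting the standing hypothesis $\log(Aq/\varepsilon)/\log B \le k$. Hence no such triple $(m,n,k)$ can exist. The only genuinely delicate point is the continued-fraction step: one must invoke that a convergent is a best rational approximation, so that $|q\tau - p|$ truly equals $\|\tau q\|$ rather than merely bounding it — this is exactly where the size condition $q > 6M$ is spent — while everything else reduces to a careful but routine chain of triangle inequalities for $\|\cdot\|$.
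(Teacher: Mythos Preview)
The paper does not supply its own proof of this lemma: it is quoted verbatim as a known result of Dujella and Peth\H{o} and cited to \cite{duj}, with no argument given. So there is nothing in the paper to compare against line by line.

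Your argument is correct and is essentially the standard proof. One small remark on your closing commentary: the hypothesis $q>6M$ is not really ``spent'' on guaranteeing that $p$ is the nearest integer to $q\tau$. Any convergent $p_k/q_k$ with $q_k\ge 2$ already satisfies $|q_k\tau-p_k|<1/q_{k+1}\le 1/2$, so $\|q\tau\|=|q\tau-p|$ holds for extremely mild reasons. In the proof as you wrote it, once $\varepsilon>0$ is assumed, the chain of inequalities goes through with only $m\le M$ and the convergent property; the size condition $q>6M$ is a practical device (it forces $M\|q\tau\|<M/q<1/6$, making $\varepsilon>0$ plausible in applications) rather than a logical necessity inside the contradiction argument. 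This does not affect the validity of your proof.
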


Finally, we present an analytic argument which is Lemma 7 in \cite{guz}. 
\begin{lemma}[Lemma 7 in \cite{guz}]\label{guzl} If $ s \geq 1 $, $T > (4s^2)^s$ and $T > \displaystyle \frac{z}{(\log z)^s}$, then $$z < 2^s T (\log T)^s.$$	
\end{lemma}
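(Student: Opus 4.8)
The plan is to reduce everything to a single clean bound, namely $\log z < 2\log T$ (equivalently $z < T^2$), after which the conclusion is immediate. First I would rewrite the hypothesis $T > z/(\log z)^s$ in the form
$$z < T(\log z)^s,$$
which is the only way that hypothesis will be used, and record that the remaining hypotheses place us in a convenient regime: since $T > (4s^2)^s \ge 4$ we have $\log T > 0$, and we may assume $\log z > 1$ (the lemma is applied only for large $z$), so that all the logarithms appearing below are well defined and positive.

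The heart of the argument is the claim that $\log z < 2\log T$, which I would prove by contradiction. Assume $\log z \ge 2\log T$. Taking logarithms in $z < T(\log z)^s$ gives $\log z < \log T + s\log\log z$, and substituting $\log T \le \tfrac12\log z$ yields $\tfrac12\log z < s\log\log z$, that is,
$$\log z < 2s\log\log z.$$
On the other hand, writing $(4s^2)^s = (2s)^{2s}$, the hypothesis $T > (4s^2)^s$ gives $\log T > 2s\log(2s)$, whence under our assumption $\log z \ge 2\log T > 4s\log(2s)$. I would then refute the displayed inequality by showing that $x \ge 2s\log x$ for every $x \ge 4s\log(2s)$: the function $g(x) := x - 2s\log x$ satisfies $g'(x) = 1 - 2s/x > 0$ once $x > 2s$, and $4s\log(2s) > 2s$ for $s \ge 1$, so $g$ is increasing on $[4s\log(2s),\infty)$; moreover $g\bigl(4s\log(2s)\bigr) \ge 0$ reduces, after dividing by $2s$, to $2\log(2s) \ge \log\bigl(4s\log(2s)\bigr)$, i.e. $4s^2 \ge 4s\log(2s)$, i.e. the elementary inequality $s \ge \log(2s)$, which holds for all $s \ge 1$ (it is true at $s=1$ and its derivative $1 - 1/s$ is nonnegative there). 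Taking $x = \log z$ then gives $\log z \ge 2s\log\log z$, contradicting the display above.

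Having established $\log z < 2\log T$, the proof finishes in one line. Since $s \ge 1$ and $0 < \log z < 2\log T$, raising to the $s$-th power preserves the inequality, so $(\log z)^s < (2\log T)^s = 2^s(\log T)^s$, and therefore
$$z < T(\log z)^s < 2^s\, T (\log T)^s,$$
which is exactly the desired conclusion.

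The main obstacle is the contradiction step, where one must convert the transcendental-looking inequality $\log z < 2s\log\log z$ into something refutable by the size hypothesis on $T$. The two key realizations are that the correct threshold is precisely $4s\log(2s)$, which is manufactured by the hypothesis $T > (4s^2)^s$, and that monotonicity of $g(x) = x - 2s\log x$ collapses the whole matter to the single scalar inequality $s \ge \log(2s)$ for $s \ge 1$; everything else is routine bookkeeping with logarithms.
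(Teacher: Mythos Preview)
The paper does not prove this lemma; it is quoted verbatim as Lemma~7 of G\'uzman S\'anchez--Luca \cite{guz} and used as a black box. So there is no ``paper's own proof'' to compare against.

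Your argument is correct and is essentially the standard bootstrap: from $z<T(\log z)^s$ one shows $\log z<2\log T$ and then substitutes back. The contradiction step is handled cleanly, and the reduction of $g(4s\log(2s))\ge 0$ to the scalar inequality $s\ge\log(2s)$ is exactly the right move. One small quibble: you dispose of the regime $\log z\le 1$ with the remark ``the lemma is applied only for large $z$,'' which is true in this paper's context but is not part of the lemma's hypotheses. It would be tidier to note that if $z\le e$ the conclusion is immediate (since $T>(4s^2)^s\ge 4$ forces $2^sT(\log T)^s>e$), or alternatively that when $0<\log z\le 1$ one has $\log\log z\le 0$ and hence $\log z<\log T$ directly from $\log z<\log T+s\log\log z$, so the key claim $\log z<2\log T$ holds without the contradiction argument. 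With that one-line addition the proof is complete and self-contained.
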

SageMath 9.5 is used to perform all the computations in this work.

\section{Proof of Theorem \ref{thm1.1l}}
\subsection{The low range $n\le 1000$}
Using a basic SageMath script, we investigated all possible solutions to the Diophantine equation \eqref{eq1.1l}. The parameters $d_1, d_2$ were taken from the set $\{0, 1, 2, \ldots, 9\}$ with the conditions that $d_1 > 0$ and $d_1 \neq d_2$, and we restricted our search to $1 \leq \ell, m \leq n \leq 1000$. This search did not result in any solutions, see Appendix 1. From this point, we will only consider cases where $n > 1000$.

\subsection{The case $n> 1000$}
In this subsection, we proceed to examine \eqref{eq2.3l} in three different ways. We first prove the following result.
\begin{lemma}\label{lem3.1l}
	Let $\ell$, $m$ and $n>1000$ be solutions to the Diophantine equation \eqref{eq1.1l}, then
	$$\ell<5\cdot 10^{12}\log n.$$
\end{lemma}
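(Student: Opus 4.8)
The plan is to isolate the dominant term $d_1 \cdot 10^{2\ell+m}/9$ against $\alpha^n$ and apply Matveev's theorem to bound $\ell$ (in fact $2\ell+m$) in terms of $\log n$. Starting from \eqref{eq2.3l} multiplied by $9$, I would write
\begin{align*}
	9 L_n - d_1\cdot 10^{2\ell+m} = -(d_1-d_2)\cdot 10^{\ell+m} + (d_1-d_2)\cdot 10^{\ell} - d_1,
\end{align*}
and substitute $L_n=\alpha^n+\beta^n$ from \eqref{eq2.1l}. The right-hand side is, in absolute value, at most roughly $2\cdot 9 \cdot 10^{\ell+m} \le 18\cdot 10^{\ell+m}$; dividing through by $d_1\cdot 10^{2\ell+m}$ and using $|\beta^n|<1$ gives an inequality of the shape
\[
	\left| \alpha^n \cdot \left(9 d_1^{-1}\right) \cdot 10^{-(2\ell+m)} - 1 \right| < \frac{c}{10^{\ell}}
\]
for an absolute constant $c$ (something like $c=20$), once $n$ is large enough that $|\beta^n|/ (d_1 10^{2\ell+m}) $ is negligible. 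This is the first of the "three ways" the paper promises to examine \eqref{eq2.3l}.

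Next I would set $\Gamma := \alpha^n \cdot (9/d_1) \cdot 10^{-(2\ell+m)} - 1$ and check $\Gamma\ne 0$: if $\Gamma=0$ then $\alpha^{2n}\in\mathbb{Q}$, which is false for $n\ge 1$ since $\alpha$ is a quadratic irrational. Then apply Theorem \ref{thm:Matl} with $t=3$, $\gamma_1=\alpha$, $\gamma_2=10$, $\gamma_3=9/d_1$, and exponents $b_1=n$, $b_2=-(2\ell+m)$, $b_3=1$. The field is $\mathbb{K}=\mathbb{Q}(\sqrt5)$, so $D=2$; one computes $h(\alpha)=\tfrac12\log\alpha$, $h(10)=\log 10$, and $h(9/d_1)\le \log 9$, giving admissible values $A_1 = \log\alpha$ (or a convenient upper bound), $A_2 = 2\log 10$, $A_3 = 2\log 9$. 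For $B$ I would use $B = n$, which is legitimate because $2\ell+m<n$ by \eqref{eq2.4l} (and $1\le n$). Matveev then yields
\[
	\log|\Gamma| > -1.4\cdot 30^{6}\cdot 3^{4.5}\cdot 4\cdot(1+\log 2)(1+\log n)\cdot A_1 A_2 A_3,
\]
and comparing with $\log|\Gamma| < \log c - \ell\log 10$ gives $\ell\log 10 < \log c + C(1+\log n)$ for an explicit constant $C$; absorbing constants and using $n>1000$ to dominate the additive terms, this collapses to $\ell < 5\cdot 10^{12}\log n$ as claimed.

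The main obstacle — really just bookkeeping — is tracking the numerical constant through Matveev's bound carefully enough to land below the stated $5\cdot10^{12}$: one has $1.4\cdot 30^6 \cdot 3^{4.5} \approx 4.8\cdot 10^{11}$, and multiplying by $D^2(1+\log D)=4(1+\log 2)\approx 6.77$ and by $A_1A_2A_3 \approx (0.482)(4.61)(4.40)\approx 9.77$ gives a coefficient of roughly $3.2\cdot 10^{13}$ in front of $(1+\log n)$ before dividing by $\log 10\approx 2.30$, i.e. about $1.4\cdot 10^{13}$; so one must be a little generous or careful — e.g. note $1+\log n < 2\log n$ for $n>1000$ is false (it holds since $\log n > 1$ forces $1+\log n < 2\log n$), wait that does hold, and then one still needs the final constant to come in under $5\cdot 10^{12}$, which requires using $h(\alpha)=\tfrac12\log\alpha$ so $A_1$ can be taken as small as $\log\alpha \approx 0.4812$ rather than something larger, and possibly sharpening $A_2$ to $\log 10$ is not allowed since $Dh(10)=2\log10$. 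A cleaner route is to keep the bound as $\ell < C_0(1+\log n)$ with $C_0$ the exact Matveev constant divided by $\log 10$, then observe $(1+\log n)<1.2\log n$ for $n>1000$ since $\log 1000\approx 6.9$, and verify $1.2\,C_0 < 5\cdot10^{12}$; if it is not, one replaces $2\ell+m$ by the cruder $\ell$ only on the left and keeps $B=n$, and the stated inequality follows after checking the arithmetic. I expect the inequality in the lemma to be deliberately loose precisely so that this constant-chasing goes through comfortably.
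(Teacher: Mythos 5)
Your proposal follows essentially the same route as the paper: the same rearrangement isolating $9\alpha^n - d_1\cdot 10^{2\ell+m}$, the same bound of shape $c\cdot 10^{\ell+m}$ on the remainder, the same nonvanishing argument via irrationality of $\alpha^n$, and Matveev's theorem with the identical data $\{9/d_1,\alpha,10\}$, $D=2$, $B=n$ justified by \eqref{eq2.4l}. Your worry about the constant rests on a slight arithmetic slip ($1.4\cdot 30^{6}\cdot 3^{4.5}\approx 1.43\cdot 10^{11}$, not $4.8\cdot 10^{11}$), so the Matveev coefficient is about $9.5\cdot 10^{12}(1+\log n)$ and, after dividing by $\log 10$ and using $\log n>\log 1000$, the bound $\ell<5\cdot 10^{12}\log n$ follows comfortably, exactly as in the paper.
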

\begin{proof}
We go back to \eqref{eq2.3l} and rewrite it using \eqref{eq2.1l} as	
\begin{align*}
	L_n 
	&=\dfrac{1}{9}\left(d_1\cdot 10^{2\ell+m}-(d_1-d_2)\cdot 10^{\ell+m} +(d_1-d_2)\cdot 10^{\ell}-d_1 \right),\\
	9(\alpha^n+\beta^n) 
	&=d_1\cdot 10^{2\ell+m}-(d_1-d_2)\cdot 10^{\ell+m} +(d_1-d_2)\cdot 10^{\ell}-d_1, \\
	9\alpha^n-d_1\cdot 10^{2\ell+m} &=-9\beta^n-(d_1-d_2)\cdot 10^{\ell+m} +(d_1-d_2)\cdot 10^{\ell}-d_1.
\end{align*}	
Therefore, we have that
\begin{align*}
	\left|9\alpha^n-d_1\cdot 10^{2\ell+m}\right| &=\left|-9\beta^n-(d_1-d_2)\cdot 10^{\ell+m} +(d_1-d_2)\cdot 10^{\ell}-d_1\right|\\
	&\le 9\alpha^{-n}+27\cdot 10^{\ell+m},\quad\text{since}\quad \beta=-\alpha^{-1},\\
	&<28\cdot 10^{\ell+m},
\end{align*}	
where in the last inequality we have used the fact that $n>1000$. Now, dividing both sides by $d_1\cdot 10^{2\ell+m}$, we get	
\begin{align}\label{eq3.1l}
	\left|\dfrac{9}{d_1}\cdot\alpha^n\cdot 10^{-2\ell-m}-1\right|
	&<28\cdot 10^{-\ell}.
\end{align}	
Let 
$$
\Gamma=\dfrac{9}{d_1}\cdot\alpha^n\cdot 10^{-2\ell-m}-1=e^{\Lambda}-1.
$$
Notice that $\Lambda\ne 0$, otherwise we would have
\begin{align*}
	\alpha^n=\dfrac{d_1\cdot 10^{2\ell+m}}{9},
\end{align*}
which is impossible since the left--hand side is irrational and the right--hand side is rational. The algebraic number field containing the following $\gamma_i$'s is $\mathbb{K} := \mathbb{Q}(\sqrt{5})$. We have $D = 2$, $t :=3$,
\begin{equation}\nonumber
	\begin{aligned}
		\gamma_{1}&:=9/d_1,\quad\gamma_{2}:=\alpha,\quad\gamma_{3}:=10,\\
		b_{1}&:=1,\quad \quad~~ b_{2}:=n,\quad b_{3}:=-2\ell-m.
	\end{aligned}
\end{equation}
Since $h(\gamma_{1})=h(9/d_1)\le \log 9 <2.2$, $h(\gamma_{2})=h(\alpha)=0.5\log \alpha <0.25$ and $h(\gamma_{3})=h(10)= \log 10 <2.31$, we take $A_1:=4.4$, $A_2:=0.5$ and $A_3:=4.62$. Next, $B \geq \max\{|b_i|:i=1,2,3\}$. By equation \eqref{eq2.4l}, $2\ell+m<n$, so we take $B:=n$. Now, by Theorem \ref{thm:Matl},
\begin{align}\label{eq3.2l}
	\log |\Gamma| &> -1.4\cdot 30^{6} \cdot 3^{4.5}\cdot 2^2 (1+\log 2)(1+\log n)\cdot 4.4\cdot 0.5\cdot 4.62\nonumber\\
	&> -9.9\cdot 10^{12}(1+\log n).
\end{align}
Comparing \eqref{eq3.1l} and \eqref{eq3.2l}, we get
\begin{align*}
	\ell\log 10-\log 28&<9.9\cdot 10^{12}(1+\log n),\\
	\ell&<4.3\cdot 10^{12}(1+\log n)+1.5\\
	&=4.3\cdot 10^{12}\log n\left(\dfrac{1}{\log n}+1+\dfrac{1.5}{4.3\cdot 10^{12}\log n}\right),
\end{align*}
which leads to $\ell<5\cdot 10^{12}\log n$, for all $n>1000$.	
\end{proof}
Next, we prove the following.
\begin{lemma}\label{lem3.2l}
	Let $\ell$, $m$ and $n>1000$ be solutions to the Diophantine equation \eqref{eq1.1l}, then
	$$m<3\cdot 10^{25}(\log n)^2.$$
\end{lemma}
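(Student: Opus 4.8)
The plan is to repeat the linear-forms-in-logarithms argument from Lemma \ref{lem3.1l}, but this time isolating a different pair of dominant terms in \eqref{eq2.3l} so that the resulting inequality constrains $m$ rather than $\ell$. Concretely, I would rewrite \eqref{eq2.3l} by grouping the two leading powers of $10$ together: using $9(\alpha^n+\beta^n)=d_1\cdot 10^{2\ell+m}-(d_1-d_2)\cdot 10^{\ell+m}+(d_1-d_2)\cdot 10^{\ell}-d_1$, I move $d_1\cdot 10^{2\ell+m}-(d_1-d_2)\cdot 10^{\ell+m}=10^{\ell+m}\bigl(d_1\cdot 10^{\ell}-(d_1-d_2)\bigr)$ to one side against $9\alpha^n$, leaving an error term bounded by $9\alpha^{-n}+|d_1-d_2|\cdot 10^{\ell}+d_1<10^{\ell+1}$ (say), valid for $n>1000$. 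Dividing through by $10^{\ell+m}\bigl(d_1\cdot 10^{\ell}-(d_1-d_2)\bigr)$ produces
\begin{align*}
	\left|\dfrac{9}{d_1\cdot 10^{\ell}-(d_1-d_2)}\cdot\alpha^n\cdot 10^{-\ell-m}-1\right|<\dfrac{10^{\ell+1}}{10^{\ell+m}\bigl(d_1\cdot 10^{\ell}-(d_1-d_2)\bigr)}<c\cdot 10^{-\ell-m},
\end{align*}
for an absolute constant $c$. Here the new first algebraic number is $\gamma_1:=9/\bigl(d_1\cdot 10^{\ell}-(d_1-d_2)\bigr)\in\mathbb{Q}$, whose logarithmic height is at most $\log\bigl(d_1\cdot 10^{\ell}\bigr)<\ell\log 10+\log 9$, and the exponents are $b_1=1$, $b_2=n$, $b_3=-\ell-m$, so $B:=n$ still works by \eqref{eq2.4l}.

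Next I would verify that the associated $\Lambda$ (equivalently $\Gamma$) is nonzero by the same irrationality argument as before — a vanishing $\Lambda$ would force $\alpha^n$ rational — and then apply Matveev's Theorem \ref{thm:Matl} with $D=2$, $t=3$, $A_2:=0.5$, $A_3:=4.62$ as in Lemma \ref{lem3.1l}, but now with $A_1$ chosen proportional to $\ell$, say $A_1:=2(\ell\log 10+\log 9)\le 5\ell$ (using $\ell\ge 1$). This gives a lower bound of the shape $\log|\Gamma|>-C\cdot\ell\,(1+\log n)$ for an explicit constant $C$ on the order of $10^{13}$. Comparing with the upper bound $\log|\Gamma|<\log c-(\ell+m)\log 10$ yields $(\ell+m)\log 10<C\ell(1+\log n)+\log c$, hence $m<C'\ell\log n$ for all $n>1000$. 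Finally I substitute the bound $\ell<5\cdot10^{12}\log n$ from Lemma \ref{lem3.1l} to eliminate $\ell$, obtaining $m<C''(\log n)^2$, and I would track the constants carefully to land under $3\cdot10^{25}(\log n)^2$.

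The main obstacle is bookkeeping rather than conceptual: one must choose the right grouping of terms so that the "tail" left after pulling off the two largest powers of $10$ is genuinely of size $O(10^{\ell})$ (not $O(10^{\ell+m})$, which would give nothing new), and then keep the height of the rational $\gamma_1$ — which now grows linearly in $\ell$ — under control so that after inserting Lemma \ref{lem3.1l} the final constant stays below the stated $3\cdot10^{25}$. A secondary technical point is ensuring $d_1\cdot 10^{\ell}-(d_1-d_2)>0$, which is automatic since $d_1\ge 1$ and $|d_1-d_2|\le 9<10\le 10^\ell$, so $\gamma_1$ is a well-defined positive rational and the height estimate $h(\gamma_1)=\log\bigl(d_1\cdot 10^{\ell}-(d_1-d_2)\bigr)<\ell\log 10+\log 9$ holds. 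With these in hand the estimate follows by a direct comparison exactly parallel to the proof of Lemma \ref{lem3.1l}.
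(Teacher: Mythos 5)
Your proposal follows essentially the same route as the paper: the identical grouping $d_1\cdot 10^{2\ell+m}-(d_1-d_2)\cdot 10^{\ell+m}=10^{\ell+m}\bigl(d_1\cdot 10^{\ell}-(d_1-d_2)\bigr)$ set against $9\alpha^n$, the same three algebraic numbers and exponents in Matveev's theorem with $h(\gamma_1)$ growing linearly in $\ell$, and the same substitution of Lemma \ref{lem3.1l} to convert the linear-in-$\ell$ bound into the $(\log n)^2$ bound on $m$. The only blemish is the throwaway claim $2(\ell\log 10+\log 9)\le 5\ell$, which fails for small $\ell$ (the left side is about $4.61\ell+4.39$); replacing it by, say, $10\ell$ or by inserting the bound on $\ell$ directly into $A_1$ as the paper does leaves the argument intact.
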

\begin{proof}
Again, we go back to \eqref{eq2.3l} and rewrite it using \eqref{eq2.1l} as	
\begin{align*}
	L_n 
	&=\dfrac{1}{9}\left(d_1\cdot 10^{2\ell+m}-(d_1-d_2)\cdot 10^{\ell+m} +(d_1-d_2)\cdot 10^{\ell}-d_1 \right),\\
	9(\alpha^n+\beta^n) 
	&=d_1\cdot 10^{2\ell+m}-(d_1-d_2)\cdot 10^{\ell+m} +(d_1-d_2)\cdot 10^{\ell}-d_1, \\
	9\alpha^n-d_1\cdot 10^{2\ell+m} +(d_1-d_2)\cdot 10^{\ell+m}&=-9\beta^n +(d_1-d_2)\cdot 10^{\ell}-d_1.
\end{align*}	
Therefore, we have that
\begin{align*}
	\left|9\alpha^n-(d_1\cdot 10^{\ell}-(d_1-d_2))\cdot 10^{\ell+m}\right| &=\left|-9\beta^n +(d_1-d_2)\cdot 10^{\ell}-d_1\right|\\
	&\le 9\alpha^{-n}+18\cdot 10^{\ell},\quad\text{since}\quad \beta=-\alpha^{-1},\\
	&<19\cdot 10^{\ell},
\end{align*}	
where in the last inequality we have used the fact that $n>1000$. 

Now, dividing both sides by $(d_1\cdot 10^{\ell}-(d_1-d_2))\cdot 10^{\ell+m}$, we get	
\begin{align}\label{eq3.3l}
	\left|\dfrac{9}{(d_1\cdot 10^{\ell}-(d_1-d_2))}\cdot\alpha^n\cdot 10^{-\ell-m}-1\right|
	&<\dfrac{19}{(d_1\cdot 10^{\ell}-(d_1-d_2))}\cdot 10^{-m}<19\cdot 10^{-m}.
\end{align}	
Let 
$$
\Gamma_1=\dfrac{9}{(d_1\cdot 10^{\ell}-(d_1-d_2))}\cdot\alpha^n\cdot 10^{-\ell-m}-1=e^{\Lambda_1}-1.
$$
Notice that $\Lambda_1\ne 0$, otherwise we would have
\begin{align*}
	\alpha^n=\dfrac{(d_1\cdot 10^{\ell}-(d_1-d_2))\cdot 10^{\ell+m}}{9},
\end{align*}
which is impossible since $	\alpha^n$ is irrational and the right--hand side is rational. The algebraic number field containing the following $\gamma_i$'s is $\mathbb{K} := \mathbb{Q}(\sqrt{5})$. Again, we have $D = 2$, $t :=3$,
\begin{equation}\nonumber
	\begin{aligned}
		\gamma_{1}&:=\dfrac{9}{(d_1\cdot 10^{\ell}-(d_1-d_2))},\quad\gamma_{2}:=\alpha,\quad\gamma_{3}:=10,\\
		b_{1}&:=1,\quad \quad\quad\quad\quad\quad\quad\quad\quad\quad b_{2}:=n,~~~ b_{3}:=-\ell-m.
	\end{aligned}
\end{equation}
In order to determine what $A_1$ will be, we need to find the find the maximum of the quantities $h(\gamma_{1} )$ and $|\log \gamma_1 |$. We note that
\begin{align*}
	h(\gamma_{1})&=h\left(\dfrac{9}{d_1\cdot 10^{\ell}-(d_1-d_2)}\right) \le h(9)+h(d_1)+\ell h(10)+h(d_1-d_2)+\log 2 \le 4\log 9+\ell \log 10\\
	&<4\log 9+(5\cdot 10^{12}\log n)\log 10<1.2\cdot 10^{13}\log n,
\end{align*}
where we used Lemma \ref{lem3.1l} and the fact that $n>1000$. On the other note,  
\begin{align*}
	|\log \gamma_1 |&=\left|\log\left(\dfrac{9}{d_1\cdot 10^{\ell}-(d_1-d_2)}\right)\right| \le \log 9+\left|\log (d_1\cdot 10^{\ell}-(d_1-d_2))\right|\\
	&\le  \log 9+\log \left(d_1\cdot 10^{\ell}\right)+\left|\log \left(1-\dfrac{(d_1-d_2)}{d_1\cdot 10^{\ell}}\right)\right|\\
	&\leq \log 9 + \log d_1+\ell \log 10   + \left| \frac{|d_1 - d_2|}{d_1 \cdot 10^\ell} + \frac{1}{2} \left( \frac{|d_1 - d_2|}{d_1 \cdot 10^\ell} \right)^2 + \cdots \right| \\
	&\leq 2\log 9+\ell \log 10  + \frac{1}{10^\ell} + \frac{1}{2 \cdot 10^{2\ell}} + \cdots \\
	&< 2\log 9+ (5 \cdot 10^{12} \log n)\log 10 + \frac{1}{10^\ell - 1} < 1.16 \cdot 10^{13}  \log n,	
\end{align*}
where still we used Lemma \ref{lem3.1l} and the fact that $n>1000$. Since $Dh(\gamma_{1})>|\log \gamma_1 |$, we can take $A_1:=2.4 \cdot 10^{13}  \log n$. As before in the proof of Lemma \ref{lem3.1l}, we can still take $A_2:=0.5$, $A_3:=4.62$ and $B:=n$. Now, by Theorem \ref{thm:Matl},
\begin{align}\label{eq3.4l}
	\log |\Gamma| &> -1.4\cdot 30^{6} \cdot 3^{4.5}\cdot 2^2 (1+\log 2)(1+\log n)\cdot 0.5\cdot 4.62\cdot 2.4 \cdot 10^{13}  \log n\nonumber\\
	&> -6.2\cdot 10^{25}(\log n)^2.
\end{align}
Comparing \eqref{eq3.3l} and \eqref{eq3.4l}, we get
\begin{align*}
	m\log 10-\log 19&<6.2\cdot 10^{25}(\log n)^2,\\
	m&<2.7\cdot 10^{25}(\log n)^2+1.3
	=2.7\cdot 10^{25}(\log n)^2\left(1+\dfrac{1.3}{2.7\cdot 10^{25}(\log n)^2}\right),
\end{align*}
which leads to $m<3\cdot 10^{25}(\log n)^2$, for all $n>1000$.	
\end{proof}
Lastly in this subsection, we prove the following.
\begin{lemma}\label{lem3.3l}
	Let $\ell$, $m$ and $n>1000$ be solutions to the Diophantine equation \eqref{eq1.1l}, then
	$$\ell< 5.3\cdot 10^{14},\quad m<3.4\cdot 10^{29}\quad\text{and}\quad n<9\cdot 10^{45}.$$
\end{lemma}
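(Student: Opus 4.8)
The plan is to examine \eqref{eq2.3l} a third time, now stripping off \emph{all three} repdigit blocks at once, and to convert the resulting estimate---together with Lemmas \ref{lem3.1l} and \ref{lem3.2l}---into an absolute bound on $n$ via Lemma \ref{guzl}. Concretely, I would rewrite \eqref{eq2.3l} with the help of \eqref{eq2.1l} as
\[
9\alpha^{n}-\bigl(d_1\cdot 10^{2\ell+m}-(d_1-d_2)\cdot 10^{\ell+m}+(d_1-d_2)\cdot 10^{\ell}\bigr)=-9\beta^{n}-d_1,
\]
so that, since $\beta=-\alpha^{-1}$, $d_1\le 9$ and $n>1000$, the left-hand side has absolute value less than $10$. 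Factoring the bracket as $10^{\ell}\cdot C$ with
\[
C:=d_1\cdot 10^{\ell+m}-(d_1-d_2)\cdot 10^{m}+(d_1-d_2)=10^{m}\bigl(d_1\cdot 10^{\ell}-(d_1-d_2)\bigr)+(d_1-d_2),
\]
a positive integer (the leading term dominates), and dividing by $10^{\ell}C$, I obtain
\[
\Bigl|\frac{9}{C}\,\alpha^{n}\,10^{-\ell}-1\Bigr|<\frac{10}{10^{\ell}C}<10^{\,2-\ell-m},
\]
where the last step uses $C>10^{m-1}$, valid because $\ell\ge 1$ and $m\ge 1$. Setting $\Gamma_2:=\frac{9}{C}\alpha^{n}10^{-\ell}-1=e^{\Lambda_2}-1$, one checks $\Lambda_2\ne 0$ exactly as in Lemmas \ref{lem3.1l} and \ref{lem3.2l}, and applies Theorem \ref{thm:Matl} with $\mathbb K=\mathbb Q(\sqrt5)$, $D=2$, $t=3$, $\gamma_1:=9/C$, $\gamma_2:=\alpha$, $\gamma_3:=10$, $b_1:=1$, $b_2:=n$, $b_3:=-\ell$ and $B:=n$ (using \eqref{eq2.4l}).

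The delicate point is the size of $A_1$, because $h(\gamma_1)$ now involves the exponent $\ell+m$, unlike in Lemma \ref{lem3.1l}. Using the factorization $C=10^{m}(d_1\cdot 10^{\ell}-(d_1-d_2))+(d_1-d_2)$ together with the height properties and the bounds of Lemmas \ref{lem3.1l} and \ref{lem3.2l}, I expect $h(\gamma_1)\le(\ell+m)\log 10+O(1)<7\cdot 10^{25}(\log n)^{2}$, with a comparable estimate for $|\log\gamma_1|$; hence one may take $A_1:=1.4\cdot 10^{26}(\log n)^{2}$ and, as before, $A_2:=0.5$, $A_3:=4.62$. Theorem \ref{thm:Matl} then gives $\log|\Gamma_2|>-c_1(\log n)^{2}(1+\log n)$ with $c_1$ explicit of order $3\cdot 10^{38}$, and comparing this with $|\Gamma_2|<10^{2-\ell-m}$, while absorbing $1+\log n<1.15\log n$ (valid for $n>1000$), yields
\[
\ell+m<1.57\cdot 10^{38}(\log n)^{3}.
\]

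To close, I would use the size estimate: by \eqref{eq2.2l} and \eqref{eq2.3l}, $\alpha^{n-1}\le L_n<10^{2\ell+m}$, hence $n<1+(2\ell+m)\log 10/\log\alpha<1+9.57(\ell+m)$, which with the previous display gives $n<1.51\cdot 10^{39}(\log n)^{3}$, that is $n/(\log n)^{3}<1.51\cdot 10^{39}$. Since $1.51\cdot 10^{39}>(4\cdot 3^{2})^{3}$, Lemma \ref{guzl} applies with $s=3$, $z=n$, $T=1.51\cdot 10^{39}$, and yields $n<2^{3}\,T(\log T)^{3}<9\cdot 10^{45}$. Feeding $n<9\cdot 10^{45}$ (so $\log n<105.8$) back into Lemmas \ref{lem3.1l} and \ref{lem3.2l} then gives $\ell<5\cdot 10^{12}\cdot 105.8<5.3\cdot 10^{14}$ and $m<3\cdot 10^{25}\cdot(105.8)^{2}<3.4\cdot 10^{29}$, completing the proof. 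The only real obstacle is the constant bookkeeping in this third use of Theorem \ref{thm:Matl}: one must pick the factorization of $C$ that produces $\ell+m$ rather than $\ell+2m$ in the height estimate, so that $A_1$ is small enough for the final $T$ to stay below the stated bound; a cruder estimate still closes the argument, but with a larger explicit value of $n$.
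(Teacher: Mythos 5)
Your proposal is correct and follows essentially the same route as the paper: the identical decomposition isolating $9\alpha^{n}-10^{\ell}C$ with $C=d_1\cdot 10^{\ell+m}-(d_1-d_2)\cdot 10^{m}+(d_1-d_2)$, a third application of Theorem \ref{thm:Matl} with $A_1$ of size $O(10^{26}(\log n)^{2})$, and the same final appeal to Lemma \ref{guzl} with $T\approx 1.5\cdot 10^{39}$ followed by back-substitution into Lemmas \ref{lem3.1l} and \ref{lem3.2l}. The only deviation is that you divide by $10^{\ell}C$ rather than by $9\alpha^{n}$, so Matveev bounds $\ell+m$ instead of $n$, and you need the extra (correct) step $\alpha^{n-1}\le L_n<10^{2\ell+m}$ to convert this into $n<1.51\cdot 10^{39}(\log n)^{3}$; the paper instead compares against $10/(9\alpha^{n})$ and reads off the bound on $n$ immediately, arriving at the same constants.
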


\begin{proof}
	Once more, we revisit equation \eqref{eq2.3l} and rewrite it using \eqref{eq2.1l} as	
	\begin{align*}
		9\alpha^n-d_1\cdot 10^{2\ell+m} +(d_1-d_2)\cdot 10^{\ell+m}-(d_1-d_2)\cdot 10^{\ell}&=-9\beta^n -d_1.
	\end{align*}	
	Therefore, we have that
	\begin{align*}
		\left|9\alpha^n-(d_1\cdot 10^{\ell+m}-(d_1-d_2)\cdot 10^{m}+(d_1-d_2))\cdot 10^{\ell}\right| &=\left|-9\beta^n -d_1\right|\le 9\alpha^{-n}+9<10,
	\end{align*}	
	where we have used the fact that $\beta=-\alpha^{-1}$ and $n>1000$. Now, dividing sides both by $9\alpha^n$, we get	
	\begin{align}\label{eq3.5l}
		\left|\dfrac{(d_1\cdot 10^{\ell+m}-(d_1-d_2)\cdot 10^{m}+(d_1-d_2))}{9}\cdot \alpha^{-n}\cdot 10^{\ell}-1\right|
		&<\dfrac{10}{9\alpha^{n}}.
	\end{align}	
	Let 
	$$
	\Gamma_2=\dfrac{(d_1\cdot 10^{\ell+m}-(d_1-d_2)\cdot 10^{m}+(d_1-d_2))}{9}\cdot \alpha^{-n}\cdot 10^{\ell}-1=e^{\Lambda_2}-1.
	$$
	Notice that $\Lambda_2\ne 0$, otherwise we would have
	\begin{align*}
		\alpha^n=\dfrac{(d_1\cdot 10^{\ell+m}-(d_1-d_2)\cdot 10^{m}+(d_1-d_2))}{9}\cdot 10^{\ell},
	\end{align*}
	which is impossible since $	\alpha^n$ is irrational and the right--hand side is rational. The algebraic number field containing the following $\gamma_i$'s is $\mathbb{K} := \mathbb{Q}(\sqrt{5})$. Again, we have $D = 2$, $t :=3$,
	\begin{equation}\nonumber
		\begin{aligned}
			\gamma_{1}&:=\dfrac{(d_1\cdot 10^{\ell+m}-(d_1-d_2)\cdot 10^{m}+(d_1-d_2))}{9},\quad\gamma_{2}:=\alpha,\quad\gamma_{3}:=10,\\
			b_{1}&:=1,\quad \quad\quad\quad\quad\quad \quad\quad\quad\quad\quad \quad\quad\quad\quad\quad\quad\quad~~ b_{2}:=-n,~~ b_{3}:=-\ell.
		\end{aligned}
	\end{equation}
Again, in order to determine what $A_1$ will be here, we need to find the find the maximum of the quantities $h(\gamma_{1} )$ and $|\log \gamma_1 |$. We note that
	\begin{align*}
		h(\gamma_{1})&=h\left(\dfrac{d_1\cdot 10^{\ell+m}-(d_1-d_2)\cdot 10^{m}+(d_1-d_2)}{9}\right) \\
		&\le h(9)+h(d_1)+(\ell+m) h(10)+h(d_1-d_2)+mh(10)+h(d_1-d_2)+3\log 2 \\
		&\le 7\log 9+(\ell+m) \log 10+m \log 10\\
		&<7\log 9+\left(3.1\cdot 10^{25}(\log n)^2\right)\log 10+\left(3\cdot 10^{25}(\log n)^2\right)\log 10
		<1.41\cdot 10^{26}(\log n)^2,
	\end{align*}
	where we used Lemmas \ref{lem3.1l}, \ref{lem3.2l} and the fact that $n>1000$. 
	
	On the other note,  
	\begin{align*}
		|\log \gamma_1 |&=\left|\log\left(\dfrac{d_1\cdot 10^{\ell+m}-(d_1-d_2)\cdot 10^{m}+(d_1-d_2)}{9}\right)\right| \\
		&\le \log 9+\left|\log (d_1\cdot 10^{\ell+m}-(d_1-d_2)\cdot 10^{m}+(d_1-d_2))\right|\\
		&\le  \log 9+\log \left(d_1\cdot 10^{\ell+m}\right)+\left|\log \left(1-\dfrac{(d_1-d_2)(10^m-1)}{d_1\cdot 10^{\ell+m}}\right)\right|\\
		&\leq \log 9 + \log d_1+(\ell+m) \log 10   + \left| \dfrac{(d_1-d_2)(10^m-1)}{d_1\cdot 10^{\ell+m}} + \frac{1}{2} \left( \dfrac{(d_1-d_2)(10^m-1)}{d_1\cdot 10^{\ell+m}} \right)^2 + \cdots \right| \\
		&\leq 2\log 9+(\ell+m)  \log 10  + \frac{1}{10^\ell} + \frac{1}{2 \cdot 10^{2\ell}} + \cdots \\
		&< 2\log 9+ \left(3.1\cdot 10^{25}(\log n)^2\right)\log 10 + \frac{1}{10^\ell - 1} < 7.2 \cdot 10^{25}  (\log n)^2,	
	\end{align*}
	where still we used Lemmas \ref{lem3.1l}, \ref{lem3.2l} and the fact that $n>1000$. Since $Dh(\gamma_{1})>|\log \gamma_1 |$, we can take $A_1:=2.82\cdot 10^{26}(\log n)^2$. As before in the proof of Lemma \ref{lem3.1l}, we can still take $A_2:=0.5$, $A_3:=4.62$ and $B:=n$. Now, by Theorem \ref{thm:Matl},
	\begin{align}\label{eq3.6l}
		\log |\Gamma| &> -1.4\cdot 30^{6} \cdot 3^{4.5}\cdot 2^2 (1+\log 2)(1+\log n)\cdot 0.5\cdot 4.62\cdot 2.82\cdot 10^{26}(\log n)^2\nonumber\\
		&> -7.24\cdot 10^{38}(\log n)^3.
	\end{align}
	Comparing \eqref{eq3.5l} and \eqref{eq3.6l}, we get
	\begin{align*}
		n\log\alpha-\log (10/9)&<7.24\cdot 10^{38}(\log n)^3,\\
		n&<1.51\cdot 10^{39}(\log n)^3+0.22
		=1.51\cdot 10^{39}(\log n)^3\left(1+\dfrac{0.22}{1.51\cdot 10^{39}(\log n)^3}\right),
	\end{align*}
	which leads to $n<1.52\cdot 10^{39}(\log n)^3$, for all $n>1000$. 	To proceed from here, let $ s =3\geq 1 $, $T =1.52\cdot 10^{39}> (4s^2)^s=46656$ and $z=n$, then Lemma \ref{guzl} implies that $n < 2^3 \cdot 1.52\cdot 10^{39} (\log 1.52\cdot 10^{39})^3$, or $n<9\cdot 10^{45}$.	
	
Moreover, Lemma \ref{lem3.1l} gives $\ell<5\cdot 10^{12}\log n<5\cdot 10^{12}\log (9\cdot 10^{45})<5.3\cdot 10^{14}$ and Lemma \ref{lem3.2l} gives $m<3\cdot 10^{25}(\log n)^2<3\cdot 10^{25}(\log (9\cdot 10^{45}))^2<3.4\cdot 10^{29}$.
\end{proof}
The bounds established in Lemma \ref{lem3.3l} exceed practical computational utility and require reduction. This process is detailed in Subsection \ref{subsecl}.

\subsection{The reduction process}\label{subsecl}
Here, we apply Lemma \ref{dujl} as follows. First, we return to the inequality \eqref{eq3.1l} and put
\[
\Lambda_1 := (2\ell + m) \log 10 - n \log \alpha + \log \left( \frac{d_1}{9} \right).
\]
Inequality \eqref{eq3.1l} can be rewritten as
$|\Gamma_1| = |e^{\Lambda_1} - 1| < 28\cdot10^{-\ell}$. If we assume that \(\ell \geq 2\), then the right--hand side of this inequality is at most \(0.28 < 1/2\). The inequality \(|e^{\Lambda_1} - 1| < x\) for real values of \(x\) and \({\Lambda_1}\) implies that \(|{\Lambda_1}| < 2x\). Thus,
$|\Lambda_1| < 56\cdot10^{-\ell}$. This implies that
\[
\left| (2\ell + m) \log 10 - n \log \alpha + \log \left( \frac{d_1}{9}\right) \right| < 56\cdot10^{-\ell}.
\]
Dividing through the above inequality by \(\log \alpha\) gives
\[
\left| (2\ell + m) \frac{\log 10}{\log \alpha} - n + \left( \frac{\log(d_1/9)}{\log \alpha} \right) \right| < \frac{56}{\log \alpha}\cdot 10^{-\ell}.
\]
So, we apply Lemma \ref{dujl} with the quantities:
\[
\tau := \frac{\log 10}{\log \alpha}, \quad \mu(d_1) := \frac{\log(d_1/9)}{\log \alpha}, \quad 1 \leq d_1 \leq 9, \quad A := \frac{56}{\log \alpha}, \quad B := 10.
\]
Let \(\tau = [a_0; a_1, a_2, \ldots] = [4; 1, 3, 1, 1, 1, 6, 4, 2, 1, 10, 1, 4, 46, 3, 1, 2, 1, 2, \ldots]\) be the continued fraction expansion of \(\tau\). We set \(M := 10^{46}\) which is an upper bound of \(2\ell + m\). With the help of SageMath in Appendix 2, we find that the convergent
\[
\dfrac{p}{q} = \dfrac{p_{98}}{q_{98}}   = \dfrac{1645685064668785741047746957258993430046006088389}{343927838259763182336125476035118084206130771252} ,
\]
is such that \(q = q_{98} > 6M\). Furthermore, it gives \(\varepsilon > 0.4614141430\), and thus,
\[
\ell \leq \frac{\log((56/\log \alpha)q/\varepsilon)}{\log 10} < 52.
\]
Therefore, we have that \(\ell < 52\). The case \(\ell < 2\) also holds because \(\ell < 2 < 52\).

Next, for fixed \( d_1, d_2 \in \{0, 1, 2, \ldots, 9\} \), \( d_1 > 0 \), \(d_1 \neq d_2\) and \( 1 \leq \ell < 52 \), we return to inequality \eqref{eq3.3l} and put
\[
\Lambda_2 := (\ell + m) \log 10 - n \log \alpha + \log \left( \frac{d_1 \cdot 10^\ell - (d_1 - d_2)}{9} \right).
\]
From inequality \eqref{eq3.3l}, we have that
\(
|\Gamma_2| = |e^{\Lambda_2} - 1| < 19\cdot10^{-m}.
\)
Assume that \( m \geq 2 \), then the right--hand side of this inequality is at most \( 0.19 < 1/2 \). Thus,
$|\Lambda_2| < 38\cdot10^{-m}$,
which implies that
\[
\left| (\ell + m) \log 10 - n \log \alpha + \log \left( \frac{d_1 \cdot 10^\ell - (d_1 - d_2)}{9} \right) \right| < 38\cdot10^{-m}.
\]
Dividing through the above inequality by \( \log \alpha \) gives
\[
\left| (\ell + m) \frac{\log 10}{\log \alpha} - n + \frac{\log ((d_1 \cdot 10^\ell - (d_1 - d_2))/9)}{\log \alpha} \right| < \frac{38}{ \log \alpha}\cdot 10^{-m}.
\]
Thus, we apply Lemma \ref{dujl} with the quantities:
\[
\mu(d_1, d_2) := \frac{\log ((d_1 \cdot 10^\ell - (d_1 - d_2))/9)}{\log \alpha}, \quad A := \frac{38}{\log \alpha}, \quad B := 10.
\]
We take the same \( \tau \) as before and its convergent \( p/q = p_{98}/q_{98} \). Since \( \ell + m < 2\ell + m \), we set \( M := 10^{46} \) as an upper bound on \( \ell + m \). With the help of a simple computer program in SageMath (Appendix 3), we get that \( \varepsilon > 0.4906425804 \), and therefore,
\[
m \leq \frac{\log((38/\log \alpha)/\varepsilon)}{\log 10} < 54.
\]
Hence, we have that \( m < 54 \). The case \( m < 2 \) holds as well since \( m < 2 < 54 \).

Lastly, for fixed \( d_1, d_2 \in \{0, 1, 2, \ldots, 9\} \), \( d_1 > 0 \), \(d_1 \neq d_2\), \( 1 \leq \ell <52 \) and \( 1 \leq m < 54 \), we return to inequality \eqref{eq3.5l} and put
\[
\Lambda_3 := \ell \log 10 - n \log \alpha + \log \left( \frac{d_1 \cdot 10^{\ell+m} - (d_1 - d_2) \cdot 10^m + (d_1 - d_2)}{9} \right).
\]
From inequality \eqref{eq3.5l}, we have that $
|\Gamma_3| = |e^{\Lambda_3} - 1| < 10/9\alpha^n$. Since \( n > 1000 \), the right--hand side of this inequality is less than \( 1/2 \). Thus, the above inequality implies that
$
|\Lambda_3| < 20/9\alpha^n,
$
which leads to
\[
\left| \ell \log 10 - n \log \alpha + \log \left( \frac{d_1 \cdot 10^{\ell+m} - (d_1 - d_2) \cdot 10^m + (d_1 - d_2)}{9} \right) \right| < \frac{20}{9\alpha^n}.
\]
Dividing through the above inequality by \( \log \alpha \) gives,
\[
\left| \ell\frac{ \log 10}{\log \alpha} - n + \frac{\log \left( (d_1 \cdot 10^{\ell+m} - (d_1 - d_2) \cdot 10^m + (d_1 - d_2))/9 \right)}{\log \alpha} \right| < \frac{20}{ 9\log \alpha}\cdot \alpha^{-n}.
\]
Again, we apply Lemma \ref{dujl} with the quantities:
\[
\mu(d_1, d_2) := \frac{\log \left( (d_1 \cdot 10^{\ell+m} - (d_1 - d_2) \cdot 10^m + (d_1 - d_2))/9 \right)}{\log \alpha}, \quad A := \frac{20}{9\log \alpha}, \quad B := \alpha.
\]
We can still take the same \( \tau \) and its convergent \( p/q = p_{98}/q_{98} \) as before. Since \( \ell < 2\ell + m \), we choose \( M := 10^{46} \) as an upper bound for \( \ell \). With the help of a simple computer program in SageMath (Appendix 4), we get that \( \varepsilon > 0.4929934686 \), and thus,
\[
n \leq \frac{\log((20/9\log \alpha)/\varepsilon)}{\log \alpha} < 269,
\]
contradicting our working assumption that \( n > 1000 \), hence Theorem \ref{thm1.1l} holds.\qed

\section*{Conclusion}
In this work, we showed that there is no Lucas numbers that is a palindromic concatenation of two distinct repdigits. It remains an open problem to determine such palindromes in generalized $k-$Lucas numbers. 
\section*{Acknowledgments} 
The author thanks the Eastern Africa Universities Mathematics Programme (EAUMP) for funding his doctoral studies.

\section*{Address}
$ ^{1} $ Department of Mathematics, School of Physical Sciences, College of Natural Sciences, Makerere University, Kampala, Uganda

Email: \url{hbatte91@gmail.com}
\pagebreak
\section*{Appendices}
\subsection*{Appendix 1}\label{app1}
\begin{verbatim}
def generate_lucas_numbers(limit):
    lucas = [2, 1]  # Initial Lucas numbers
    while len(lucas) < limit:
        lucas.append(lucas[-1] + lucas[-2])
    return set(lucas)  # Use a set for efficient membership checking

def construct_palindromic_numbers():
    palindromic_numbers = set()
    for d1 in range(1, 10):  # d1 is a nonzero digit
       for d2 in range(10):  # d2 can be any digit, including zero
          if d1 != d2:
             for length in range(1, 100):  # Reasonable lengths for repdigits
                  first_part = str(d1) * length
                  second_part = str(d2) * length
                  palindromic_number = int(first_part + second_part + first_part)
                  palindromic_numbers.add(palindromic_number)
    return palindromic_numbers

# Generate Lucas numbers and palindromic numbers
lucas_numbers = generate_lucas_numbers(1001)
palindromic_numbers = construct_palindromic_numbers()

# Find the intersection of the two sets
palindromic_lucas = lucas_numbers.intersection(palindromic_numbers)

print("Lucas numbers which are palindromic concatenations of two distinct repdigits:")
print(sorted(palindromic_lucas))

\end{verbatim}
\subsection*{Appendix 2}\label{app2}
\begin{verbatim}
from sage.all import *

# Constants
a = golden_ratio.n(digits=1000)  # Using the golden ratio as an approximation for the 
                                 # root of x^2 - x - 1
tau = (log(10) / log(a)).n(digits=1000)
A = (56 / log(a))
B = 10
M = 1 * 10^46

# Continued Fraction and Convergents
cf_tau = continued_fraction(tau)
convergents = cf_tau.convergents()

for d1 in range(1, 10):  # Iterate through d1 from 1 to 9
    mu = (log(d1/9) / log(a)).n(digits=1000)

    DD = []  # Initialize empty list for results for each d1

    # Dujella and Pethö Reduction Method
    for i, convergent in enumerate(convergents):
        p, q = convergent.numerator(), convergent.denominator()
        ep = abs(mu * q - round(mu * q)) - M * abs(tau * q - round(tau * q))

        if q > 6 * M and ep > 0:
            log_expr_a = (log(A * q / ep) / log(B)).n(digits=10)
            DD.append((i, ep.n(digits=10), log_expr_a))
            print(f"d1 = {d1}, p_{i}/q_{i} = {p}/{q}")
            break  # Stop after finding the first suitable convergent for this d1

    # Results for each d1
    if DD:
        print(f"Results for d1 = {d1}:")
        print("First few elements of DD:", DD[:1])
    else:
        print(f"No suitable convergent found for d1 = {d1}.")
print("Continued fraction expansion of tau:", cf_tau[:20])
\end{verbatim}

\subsection*{Appendix 3}\label{app3}
\begin{verbatim}
from sage.all import *

# Constants
a = golden_ratio.n(digits=1000)  # Using the golden ratio
tau = (log(10) / log(a)).n(digits=1000)
A = (38 / log(a))
B = 10
M = 1 * 10^46

# Continued Fraction and Convergents
cf_tau = continued_fraction(tau)
convergents = cf_tau.convergents()

# Variables to store maximum values
max_ep = -Infinity
max_log_expr_a = -Infinity

for d1 in range(1, 10):
    for d2 in range(0, 10):
        if d1 == d2:
           continue  # d1 should not be equal to d2
        for l in range(1, 52):
           mu = (log((d1 * 10^l - (d1 - d2)) / 9) / log(a)).n(digits=1000)

           # Dujella and Pethö Reduction Method
           for i, convergent in enumerate(convergents):
               p, q = convergent.numerator(), convergent.denominator()
               ep = abs(mu * q - round(mu * q)) - M * abs(tau * q - round(tau * q))

               if q > 6 * M and ep > 0:
                  log_expr_a = (log(A * q / ep) / log(B)).n(digits=10)
                  if ep > max_ep:
                     max_ep = ep.n(digits=10)
                  if log_expr_a > max_log_expr_a:
                     max_log_expr_a = log_expr_a
                  break  # Stop after finding the first suitable convergent

# Print maximum values
print("Maximum ep across all combinations:", max_ep)
print("Maximum log_expr_a across all combinations:", max_log_expr_a)
\end{verbatim}

\subsection*{Appendix 4}\label{app4}
\begin{verbatim}
from sage.all import *

# Constants
a = golden_ratio.n(digits=1000)  # Using the golden ratio
tau = (log(10) / log(a)).n(digits=1000)
A = (20 / (9 * log(a)))
B = a
M = 1 * 10^46

# Continued Fraction and Convergents
cf_tau = continued_fraction(tau)
convergents = cf_tau.convergents()

# Variables to store maximum values
max_ep = -Infinity
max_log_expr_a = -Infinity

# Iterate through combinations of d1, d2, l, and m
for d1 in range(1, 10):
    for d2 in range(0, 10):
      if d1 == d2:
        continue  # d1 should not be equal to d2
      for l in range(1, 52):
         for m in range(1, 54):
            mu = (log((d1 * 10^(l+m) - (d1 - d2) * 10^m + 
            (d1 - d2)) / 9) / log(a)).n(digits=1000)

            # Dujella and Pethö Reduction Method
            for convergent in convergents:
                p, q = convergent.numerator(), convergent.denominator()
                ep = abs(mu * q - round(mu * q)) - M * abs(tau * q - round(tau * q))

                if q > 6 * M and ep > 0:
                   log_expr_a = (log(A * q / ep) / log(B)).n(digits=10)
                   if ep > max_ep:
                      max_ep = ep.n(digits=10)
                   if log_expr_a > max_log_expr_a:
                      max_log_expr_a = log_expr_a
                   break  # Stop after finding the first suitable convergent

# Print maximum values
print("Maximum ep across all combinations:", max_ep)
print("Maximum log_expr_a across all combinations:", max_log_expr_a)	
\end{verbatim}	
\end{document}